\theoremstyle{definition}
\numberwithin{equation}{section}
\newtheorem{theorem}{Theorem}[section]
\newtheorem{proposition}[theorem]{Proposition}
\newtheorem{definition}[theorem]{Definition}
\newtheorem{example}[theorem]{Example}
\begin{document}
\author{Kamran Alam Khan}
\address{Department of Mathematics \\
 Ram Lubhai Sahani Govt. Mahila Degree College \\
 Pilibhit (U.P.)-INDIA}
\email{kamran12341@yahoo.com}
\title{Banach and Suzuki-type fixed point theorems in Generalized $n$-metric spaces with an application}
\pagestyle{myheadings}
\thispagestyle{empty}
\begin{abstract}
\noindent
  Mustafa and Sims \cite{MU2} introduced the notion of $G$-metric as a possible  generalization  of usual notion of a metric space. The author generalized the notion of G-metric to more than three variables and introduced the concept of Generalized $n$-metric spaces \cite{KHA2}. In this paper, We prove Banach fixed point theorem and a Suzuki-type fixed point theorem in Generalized $n$-metric spaces. We also discuss applications to certain functional equations arising in dynamic programming.
\end{abstract}

\subjclass[2010]{47H10; 54H25; 54E50; 90C39}
\keywords{G-metric space, Generalized $n$-metric space, Banach fixed point theorem, Suzuki-type fixed point theorem, Functional equations, Dynamic programming.}
\maketitle

\section{INTRODUCTION}
There are many ways to generalize the notion of a metric space. 2-metric space (\cite{GA1} ,\cite{GA2}), $D$-metric space \cite{DHA} and $G$-metric space \cite{MU2} are the most familiar generalizations. The author  generalized the notion of $G$-metric to more than three variables and introduced the concept of $K$-metric \cite{KHA1} and the generalized $n$-metric \cite{KHA2} . In this paper, we prove the Banach fixed point theorem and the Suzuki-type fixed point theorem in the framework of generalized $n$-metric space. We also discuss applications to certain functional equations arising in dynamic programming.
\begin{definition} \cite{KHA2}
Let $X$ be a non-empty set, and $\mathbb{R}^+$ denote the set of non-negative real numbers. Let $G_n\colon X^n \to \mathbb{R}^+$, $(n\ge 3)$ be a function satisfying the following properties:
\begin{itemize}
\item [[G 1]] $G_n(x_1,x_2,...,x_n)=0$ if $x_1=x_2=\dots =x_n$,
\item [[G 2]] $G_n(x_1,x_1,...,x_1,x_2)>0$ for all $x_1$, $x_2 \in X$ with $x_1\neq x_2$,
\item [[G 3]] $G_n(x_1,x_1,...,x_1,x_2)\leq G_n(x_1,x_2,...,x_n)$ for all $x_1$, $x_2,... ,x_n \in X$ with the condition that any two of the points $x_2,\cdots ,x_n$ are distinct,
\item [[G 4]] $G_n(x_1,x_2,...,x_n)=G_n(x_{\pi(1)},x_{\pi (2)},...,x_{\pi(n)})$, for all $x_1$, $x_2,...,x_n \in X$ and every permutation $\pi$ of $\{1,2,...n\}$,  
\item [[G 5]] $G_n(x_1,x_2,...,x_n)\le G_n(x_1,x_{n+1},...,x_{n+1})+G_n(x_{n+1},x_2,...,x_n)$ for all\\ $x_1$,$x_2,...,x_n,x_{n+1}\in X$. 
 
\end{itemize}
Then the function $G_n$ is called a \emph{Generalized $n$-metric} on $X$, and the pair $(X,G_n)$ a $ \emph{Generalized $n$-metric space}$.
\end{definition} 
From now on we always have $n\ge 3$ for $(X,G_n)$ to be a generalized $n$-metric space.

\begin{example} Define a function $\rho\colon \mathbb{R}^n \to \mathbb{R}^+$,$(n\ge3)$ by
\begin{equation*}
\rho(x_1,x_2,\dots ,x_n)= \text{max} \{ \left|x_r-x_s\right|\colon r,s\in \{1,2,...n\},r\neq s \}
\end{equation*}
for all $x_1$, $x_2,...,x_n \in X$. Then $( \mathbb{R}, \rho)$ is a generalized $n$-metric space.
\end{example}

\begin{example}
\label{exmpl} For any metric space $(X,d)$, the following functions define generalized $n$-metrics on $X$:
\begin{itemize}
\item [(1)] $K_1^d(x_1,x_2,...,x_n)=\sum_{r} \sum_{s}d(x_r,x_s)$,

\item [(2)] $K_2^d(x_1,x_2,...,x_n)=\text{max}\{d(x_r,x_s)\colon r,s\in \{1,2,...,n\},r\neq s\}$.
\end{itemize}
\end{example}
\begin{definition} \cite{KHA2} A $G_n$-metric space $(X,G_n)$ is called symmetric if
\begin{equation}\label{symm}
G_n(x,y,y,...,y)=G_n(x,x,x,...,y)
\end{equation}
\end{definition}
\begin{proposition}\cite{KHA2}
\label{inequalty}\label{ineq5}
Let $G_n\colon X^n \to \mathbb {R}^+$,$(n\ge 3)$ be a generalized $n$-metric defined on $X$, then for $x$,$y$ $\in X$ we have\\
\begin{equation}
\label{ineq1}
G_n(x,y,y,\dots ,y)\leq (n-1) G_n(y,x,x,\dots ,x)
\end{equation}
\end{proposition}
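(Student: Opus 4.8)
The plan is to use the permutation invariance [G 4] to collapse \eqref{ineq1} into an inequality between quantities that depend only on how many of the $n$ arguments equal $x$ and how many equal $y$, and then to prove that inequality by iterating a single ``triangle'' estimate furnished by [G 5].

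For $0\le k\le n$ put
\[
a_k:=G_n(\underbrace{x,\dots,x}_{k},\,\underbrace{y,\dots,y}_{n-k}),
\]
which by [G 4] is well defined, i.e.\ independent of which positions hold the $x$'s. Then the left-hand side of \eqref{ineq1} is $a_1$ and the right-hand side is $(n-1)a_{n-1}$, so it suffices to prove $a_1\le (n-1)a_{n-1}$.

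The key step is the recursion
\[
a_k\le a_{k+1}+a_{n-1},\qquad 1\le k\le n-2 .
\]
To establish it, by [G 4] rewrite $a_k$ as $G_n$ evaluated at the string whose first entry is $y$, followed by $k$ entries equal to $x$, followed by $n-1-k$ entries equal to $y$ (this string still has $k$ copies of $x$ and $n-k$ copies of $y$). Now apply [G 5] to this evaluation with leading coordinate $y$ and with the inserted point $x_{n+1}$ taken to be $x$: the first term on the right of [G 5] becomes $G_n(y,x,\dots,x)=a_{n-1}$, and the second term becomes $G_n$ at a string consisting of $k+1$ copies of $x$ and $n-1-k$ copies of $y$, that is, $a_{k+1}$. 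This is exactly the asserted recursion.

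Iterating the recursion for $k=1,2,\dots,n-2$ gives $a_1\le a_{n-1}+(n-2)a_{n-1}=(n-1)a_{n-1}$, which is \eqref{ineq1} after unwinding the abbreviation. I do not foresee a genuine obstacle; the only care needed is in tracking the multiplicities of $x$ and $y$ whenever [G 4] is invoked, and in noting that restricting to $1\le k\le n-2$ is precisely what keeps at least one trailing $y$ in the arrangement $G_n(y,x,\dots,x,y,\dots,y)$ used in the recursion, so that it is genuinely a rearrangement of $a_k$. In particular the argument uses only the symmetry axiom [G 4] and the ``rectangle'' axiom [G 5].
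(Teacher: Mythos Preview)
Your argument is correct. Defining $a_k=G_n(\underbrace{x,\dots,x}_{k},\underbrace{y,\dots,y}_{n-k})$, the identity $a_k=G_n(y,\underbrace{x,\dots,x}_{k},\underbrace{y,\dots,y}_{n-1-k})$ from [G\,4] together with [G\,5] (inserting $x$ as the auxiliary point) indeed yields $a_k\le a_{n-1}+a_{k+1}$ for $1\le k\le n-2$, and the telescoping sum gives $a_1\le(n-1)a_{n-1}$ as claimed. One small remark: your side comment that the restriction $k\le n-2$ is needed ``to keep at least one trailing $y$'' is cosmetic rather than essential---the rearrangement and the application of [G\,5] go through equally well when $k=n-1$ (the trailing block is empty and the recursion degenerates to $a_{n-1}\le a_{n-1}+0$); the real reason to stop at $k=n-2$ is simply that this is where the telescoping terminates at $a_{n-1}$.

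As for comparison with the paper: the present paper does not supply a proof of this proposition but only quotes it from \cite{KHA2}. Your iterated use of [G\,5] combined with the symmetry [G\,4] is exactly the natural route (and specializes for $n=3$ to the standard one-line proof of $G(x,y,y)\le 2G(y,x,x)$ in $G$-metric spaces), so there is nothing to contrast.
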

\begin{definition}\cite{KHA2} Let $(X,G_n)$ be a generalized $n$-metric space, then for $x_0\in X$,$r>0$, the \emph{$G_n$-ball} with centre $x_0$ and radius $r$ is
\begin{equation*}
B_G(x_0,r)=\{ y\in X \colon G_n(x_0,y,y,\dots ,y)<r \}
\end{equation*}

\end{definition}
\begin{proposition}\cite{KHA2}  Let $(X,G_n)$ be a generalized $n$-metric space, then the $G_n$-ball is open in $X$.
\end{proposition}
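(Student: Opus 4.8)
The plan is to follow the classical metric-space argument, working with the natural topology on $X$ in which a set $A\subseteq X$ is \emph{open} precisely when every point of $A$ is an interior point, i.e.\ lies in some $G_n$-ball contained in $A$. To prove that a fixed ball $B_G(x_0,r)$ is open it therefore suffices to produce, for each $y\in B_G(x_0,r)$, a radius $\delta>0$ with $B_G(y,\delta)\subseteq B_G(x_0,r)$. So I would fix $y\in B_G(x_0,r)$, note that by definition $G_n(x_0,y,y,\dots ,y)<r$, and set
\begin{equation*}
\delta = r - G_n(x_0,y,y,\dots ,y) > 0 .
\end{equation*}
The claim to be verified is then simply $B_G(y,\delta)\subseteq B_G(x_0,r)$.

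For the claim, I would take an arbitrary $z\in B_G(y,\delta)$, so that $G_n(y,z,z,\dots ,z)<\delta$, and invoke property [G 5] with the substitution $x_1=x_0$, $x_2=x_3=\dots =x_n=z$, and $x_{n+1}=y$. This gives
\begin{equation*}
G_n(x_0,z,z,\dots ,z)\le G_n(x_0,y,y,\dots ,y)+G_n(y,z,z,\dots ,z)<G_n(x_0,y,y,\dots ,y)+\delta = r ,
\end{equation*}
so $z\in B_G(x_0,r)$. Since $z\in B_G(y,\delta)$ was arbitrary we get $B_G(y,\delta)\subseteq B_G(x_0,r)$, and since $y$ was an arbitrary point of $B_G(x_0,r)$ the ball is open, completing the argument. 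This is exactly the pattern used for $G$-metric balls in Mustafa and Sims \cite{MU2}, now carried out with the $(n{+}1)$-point inequality [G 5] in place of the rectangle inequality.

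The one place that genuinely requires care — and what I would flag as the main (if mild) obstacle — is the order of the slots when applying [G 5], because $G_n$ need not be symmetric in the sense of \eqref{symm}. The inequality must be instantiated so that the two terms on its right-hand side come out to be precisely $G_n(x_0,y,\dots ,y)$ and $G_n(y,z,\dots ,z)$, i.e.\ the quantities actually controlled by membership in $B_G(x_0,r)$ and $B_G(y,\delta)$; the ``reversed'' versions $G_n(y,x_0,\dots ,x_0)$ or $G_n(z,y,\dots ,y)$ would not be directly bounded, and recovering them would cost an extra factor via the estimate \eqref{ineq1}. With the substitution above this matching is automatic, so no such detour is needed; apart from this bookkeeping the proof is routine.
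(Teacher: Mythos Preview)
Your argument is correct: the substitution $x_1=x_0$, $x_2=\cdots=x_n=z$, $x_{n+1}=y$ in [G~5] yields exactly the triangle-type estimate needed, and your remark about slot order is well taken. Note, however, that the present paper does not actually prove this proposition---it is quoted from \cite{KHA2} without proof---so there is no in-paper argument to compare against; your proof is the standard one and is presumably what \cite{KHA2} contains.
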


Hence the collection of all such balls in $X$ is closed under arbitrary union and finite intersection and therefore induces a topology on $X$ called the generalized $n$-metric topology $\Im (G_n)$ generated by the generalized $n$-metric on $X$.\\
From example~\ref{exmpl} it is clear that for a given metric we can always define generalized $n$-metrics. The converse is also true for if $G_n$ is a generalized $n$-metric then we can define a metric $d_G$ as follows-
\begin{equation} \label{met}
d_G(x,y)=G_n(x,y,y,\dots ,y)+G_n(x,x,\dots ,x,y)
\end{equation}

\begin{proposition}\cite{KHA2} Let $B_{d_G}(x,r)$ denote the open ball in the metric space $(X,d_G)$ and $B_G(x,r)$ the $G_n$-ball in the corresponding generalized $n$-metric space $(X,G_n)$. Then we have
\begin{equation*}
B_G(x,\frac{r}{n})\subseteq B_{d_G}(x,r)
\end{equation*}
\end{proposition}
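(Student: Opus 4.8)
The plan is to unwind the definition \eqref{met} of the metric $d_G$ and to estimate each of its two summands by a multiple of $G_n(x,y,y,\dots,y)$, since this last quantity is exactly what is controlled by membership in the $G_n$-ball $B_G(x,r/n)$. So I would start by fixing an arbitrary $y\in B_G(x,\tfrac{r}{n})$, which by definition of the $G_n$-ball means $G_n(x,y,y,\dots,y)<\tfrac{r}{n}$. The first summand of $d_G(x,y)$ is literally $G_n(x,y,y,\dots,y)$, so the whole task reduces to bounding the second summand $G_n(x,x,\dots,x,y)$.

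For that second summand I would first use the permutation invariance [G 4] to rewrite $G_n(x,x,\dots,x,y)=G_n(y,x,x,\dots,x)$, since both arguments are the same multiset of $n-1$ copies of $x$ and one copy of $y$. Then Proposition~\ref{inequalty} (that is, inequality \eqref{ineq1}), applied with the roles of $x$ and $y$ interchanged, yields $G_n(y,x,x,\dots,x)\le (n-1)\,G_n(x,y,y,\dots,y)$. Putting the pieces together gives
\[
d_G(x,y)=G_n(x,y,y,\dots,y)+G_n(x,x,\dots,x,y)\le n\,G_n(x,y,y,\dots,y)<n\cdot\tfrac{r}{n}=r,
\]
so $y\in B_{d_G}(x,r)$, which is precisely the asserted inclusion $B_G(x,\tfrac{r}{n})\subseteq B_{d_G}(x,r)$.

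There is no real obstacle here: the statement follows almost immediately from \eqref{met} and Proposition~\ref{inequalty}, with the only point requiring a little attention being the bookkeeping in the permutation step and the observation that \eqref{ineq1} may be invoked in the "reversed" direction simply by relabeling $x$ and $y$. (The fact that $d_G$ is genuinely a metric is taken as already established in the discussion preceding \eqref{met}.)
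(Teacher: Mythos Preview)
Your argument is correct. You unwind the definition of $d_G$, bound the second summand via Proposition~\ref{inequalty} after a relabeling, and combine to get $d_G(x,y)\le n\,G_n(x,y,\dots,y)<r$; every step is justified.

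Note, however, that the paper does not actually supply its own proof of this proposition: it is stated with a citation to \cite{KHA2} and left unproved here. So there is nothing in the present paper to compare against. That said, your route---using \eqref{met} together with \eqref{ineq1}---is the natural one and is almost certainly what the cited reference does as well, since those are the only two ingredients available that relate $G_n(x,y,\dots,y)$ to $G_n(x,\dots,x,y)$.
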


This indicates that the topology induced by the generalized $n$-metric on $X$ coincides with the metric topology induced by the metric $d_G$. Thus every generalized $n$-metric space is topologically equivalent to a metric space.

\begin{definition}\cite{KHA2} Let $(X,G_n)$ be a generalized $n$-metric space. A sequence $<x_m>$ in $X$ is said to be \emph{$G_n$-convergent} if it converges to a point $x$ in the generalized $n$-metric topology $\Im (G_n)$ generated by the $G_n$-metric on $X$.
\end{definition}
\begin{proposition}\cite{KHA2} 
\label{convsq} Let $G_r\colon X^r \to \mathbb{R}^+$, $(r\ge 3)$ be a generalized $r$-metric defined on $X$. Then for a sequence $<x_n>$ in $X$ and $x\in X$ the following are equivalent:
\begin{itemize}
\item [(1)] The sequence $<x_n>$ is $G_r$-convergent to $x$.
\item [(2)] $d_G(x_n,x)\rightarrow 0$ as $n\rightarrow \infty$.
\item [(3)] $G_r(x_n,x_n,...,x_n,x)\rightarrow 0$ as $n\rightarrow \infty$.
\item [(4)] $G_r(x_n,x,...,x)\rightarrow 0$ as $n\rightarrow \infty$.
\end{itemize}
\end{proposition}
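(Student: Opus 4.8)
The plan is to obtain the four-fold equivalence from three biconditionals, namely $(1)\Leftrightarrow(2)$, $(2)\Leftrightarrow(4)$ and $(4)\Leftrightarrow(3)$; chaining them covers every pair. Throughout, the two facts I expect to do all the work are Proposition~\ref{inequalty}, which compares $G_r(x,y,\dots,y)$ with $G_r(y,x,\dots,x)$ up to the multiplicative constant $r-1$, and the permutation invariance [G~4], which lets me rewrite $G_r(x_n,\dots,x_n,x)$ as $G_r(x,x_n,\dots,x_n)$.

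For $(1)\Leftrightarrow(2)$ I would first check that the metric topology of $d_G$ and the generalized $n$-metric topology $\Im(G_r)$ coincide. One inclusion of basic neighbourhoods is the cited containment $B_G\!\left(x,\tfrac{\varepsilon}{r}\right)\subseteq B_{d_G}(x,\varepsilon)$; for the reverse, note that $d_G(x,y)=G_r(x,y,\dots,y)+G_r(x,x,\dots,x,y)\ge G_r(x,y,\dots,y)$, whence $B_{d_G}(x,\varepsilon)\subseteq B_G(x,\varepsilon)$. Since convergence of a sequence is a purely topological notion, $\langle x_n\rangle$ is $G_r$-convergent to $x$ exactly when it converges to $x$ in $(X,d_G)$, i.e. when $d_G(x_n,x)\to 0$. (Equivalently, since the $G_r$-balls centred at $x$ form a neighbourhood base at $x$ in $\Im(G_r)$, statement $(1)$ unwinds directly to $G_r(x,x_n,\dots,x_n)\to 0$.)

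For $(4)\Leftrightarrow(3)$, by [G~4] the quantity in $(3)$ equals $G_r(x,x_n,\dots,x_n)$, and Proposition~\ref{inequalty}, applied once with $(x,y)=(x,x_n)$ and once with $(x,y)=(x_n,x)$, gives
\begin{equation*}
G_r(x,x_n,\dots,x_n)\le (r-1)\,G_r(x_n,x,\dots,x),\qquad
G_r(x_n,x,\dots,x)\le (r-1)\,G_r(x,x_n,\dots,x_n),
\end{equation*}
so either of these sequences tends to $0$ iff the other does; this is precisely $(4)\Leftrightarrow(3)$. For $(2)\Leftrightarrow(4)$: if $d_G(x_n,x)\to 0$ then, since $G_r(x_n,x,\dots,x)$ is one of the two nonnegative summands defining $d_G(x_n,x)$, statement $(4)$ follows; conversely, if $(4)$ holds, the displayed inequalities together with [G~4] force both summands $G_r(x_n,x,\dots,x)$ and $G_r(x_n,\dots,x_n,x)$ to tend to $0$, hence $d_G(x_n,x)\to 0$, which is $(2)$.

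I do not anticipate a genuine obstacle; the content is carried entirely by Proposition~\ref{inequalty}. The only points needing a little care are the bookkeeping of how many coordinates equal $x$ versus $x_n$ when invoking [G~4] and Proposition~\ref{inequalty}, and the justification that $\{B_G(x,\varepsilon):\varepsilon>0\}$ is a neighbourhood base at $x$ in $\Im(G_r)$, so that the topological statement $(1)$ genuinely reduces to an $\varepsilon$–$N$ condition.
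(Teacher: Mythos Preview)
Your argument is correct: the three biconditionals $(1)\Leftrightarrow(2)$, $(2)\Leftrightarrow(4)$, $(4)\Leftrightarrow(3)$ are all valid and chain to the full equivalence, and the only tools needed are indeed Proposition~\ref{inequalty}, symmetry [G~4], and the identification of the $G_r$-topology with the $d_G$-topology. Note, however, that the present paper does not supply its own proof of this proposition; it is quoted from \cite{KHA2} without argument, so there is no in-paper proof to compare against. Your write-up would serve perfectly well as the omitted justification.
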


\begin{definition}\cite{KHA2}
Let $(X,G^X_n)$ and $(Y,G^Y_n)$ be generalized $n$-metric spaces. A function $f\colon X \to Y$ is said to be \emph{Generalized $n$-continuous} or $G_n$-Continuous at a point $x\in X$ if $f^{-1}(B_{G^Y_n}(f(x),r))\in \Im(G^X_n)$, for all $r>0$. The function $f$ is said to be generalized $n$-continuous if it is generalized $n$-continuous at all points of $X$.
\end{definition}
Since every generalized $n$-metric space is topologically equivalent to a metric space, hence we have the following result:
\begin{proposition}\cite{KHA2} 
\label{cont}
 Let $(X,G^X_n)$ and $(Y,G^Y_n)$ be generalized $n$-metric spaces. A function $f\colon X \to Y$ is said to be generalized $n$-continuous or $G_n$-Continuous at a point $x\in X$ if and only if it is generalized $n$-sequentially continuous at $x$; that is, whenever the sequence $<x_m>$ is $G^X_n$-convergent to $x$, the sequence $<f(x_m)>$ is $G^Y_n$-convergent to $f(x)$.
\end{proposition}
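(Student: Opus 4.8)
The plan is to exploit the fact, recorded just before the statement, that the topology $\Im(G_n)$ of a generalized $n$-metric space is precisely the metric topology of the induced metric $d_G$ of (\ref{met}); in particular $\Im(G_n)$ is first countable, the $G_n$-balls centred at a point form a neighbourhood base at that point (via the inclusion $B_G(x,\tfrac{r}{n})\subseteq B_{d_G}(x,r)$ noted above), and, by Proposition~\ref{convsq}, ``$G_n$-convergence of a sequence'' coincides with convergence in $\Im(G_n)$ and with $d_G$-convergence. So the proposition is just the classical equivalence, at a point of a first-countable space, between topological continuity and sequential continuity, carried across this equivalence; the only labour is unwinding definitions, since $G_n$-continuity of $f$ at $x$ is by definition the requirement that $f^{-1}\bigl(B_{G^Y_n}(f(x),r)\bigr)\in\Im(G^X_n)$ for every $r>0$, i.e. that preimages of basic neighbourhoods of $f(x)$ are neighbourhoods of $x$.

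For the forward implication I would assume $f$ is $G_n$-continuous at $x$ and take a sequence $\langle x_m\rangle$ that is $G^X_n$-convergent to $x$, i.e. $x_m\to x$ in $\Im(G^X_n)$. Fixing $r>0$, the ball $B_{G^Y_n}(f(x),r)$ is open and contains $f(x)$, so $U:=f^{-1}\bigl(B_{G^Y_n}(f(x),r)\bigr)$ is open in $\Im(G^X_n)$ and contains $x$; hence $U$ contains some $G^X_n$-ball $B_{G^X_n}(x,\delta)$. Topological convergence yields an index $N$ with $x_m\in B_{G^X_n}(x,\delta)\subseteq U$ for all $m\ge N$, so $G^Y_n(f(x),f(x_m),\dots,f(x_m))<r$ for $m\ge N$. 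Since $r>0$ was arbitrary, $G^Y_n(f(x),f(x_m),\dots,f(x_m))\to 0$, which by Proposition~\ref{convsq}(4) is exactly the statement that $\langle f(x_m)\rangle$ is $G^Y_n$-convergent to $f(x)$.

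For the converse I would argue by contraposition. If $f$ fails to be $G_n$-continuous at $x$, there is $r>0$ with $f^{-1}\bigl(B_{G^Y_n}(f(x),r)\bigr)\notin\Im(G^X_n)$; this set contains $x$ but is not open, and since the balls $B_{G^X_n}(x,\tfrac1m)$ form a neighbourhood base at $x$, for each $m\ge1$ it fails to contain $B_{G^X_n}(x,\tfrac1m)$, so we may choose $x_m$ with $G^X_n(x,x_m,\dots,x_m)<\tfrac1m$ and $G^Y_n(f(x),f(x_m),\dots,f(x_m))\ge r$. By Proposition~\ref{convsq}(4) the sequence $\langle x_m\rangle$ is $G^X_n$-convergent to $x$, whereas $\langle f(x_m)\rangle$ cannot be $G^Y_n$-convergent to $f(x)$, since that would force $G^Y_n(f(x),f(x_m),\dots,f(x_m))\to 0$; thus $f$ is not $G_n$-sequentially continuous at $x$.

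I expect no real obstacle here; the one place calling for a little care is the asymmetry of $G_n$, so that $G^Y_n(f(x),f(x_m),\dots,f(x_m))$ and $G^Y_n(f(x_m),f(x),\dots,f(x))$ must not be conflated. If one prefers to read convergence off condition (3) of Proposition~\ref{convsq} rather than condition (4), one passes between these two quantities by inequality (\ref{ineq1}) of Proposition~\ref{inequalty}; alternatively the whole asymmetry is avoided by phrasing both convergences through the metric $d_G$ and quoting Proposition~\ref{convsq}. Everything else is the familiar ``metrizable $\Rightarrow$ first countable $\Rightarrow$ sequential criterion for continuity at a point'' argument.
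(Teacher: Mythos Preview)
The paper does not actually prove this proposition; it is quoted from \cite{KHA2}, and the only justification the paper offers is the sentence immediately preceding the statement: ``Since every generalized $n$-metric space is topologically equivalent to a metric space, hence we have the following result.'' Your proposal is precisely a detailed unpacking of that one sentence---reduce to the metric $d_G$, use first countability, and run the standard sequential criterion---so your approach coincides with what the paper (implicitly) intends.

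One small point of care in your contrapositive: the paper's definition of $G_n$-continuity at $x$ literally asks that $f^{-1}\bigl(B_{G^Y_n}(f(x),r)\bigr)$ lie in $\Im(G^X_n)$, i.e.\ be \emph{open}, not merely be a neighbourhood of $x$. Read verbatim, failure of openness only gives a non-interior point somewhere in the preimage, not necessarily at $x$, so your step ``the set is not open, hence it fails to contain $B_{G^X_n}(x,1/m)$'' does not follow as written. Under the intended reading---that $x$ is interior to the preimage, i.e.\ ordinary continuity at $x$ in $\Im(G_n)$---your argument is correct without change; this is the only reading under which the proposition holds and is surely what \cite{KHA2} means.
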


\begin{proposition}\cite{KHA2} Let $(X,G_n)$ be a generalized $n$-metric space, then the function $G_n(x_1,x_2,...,x_n)$ is jointly continuous in the variables $x_1$,$x_2,...,x_n$.
\end{proposition}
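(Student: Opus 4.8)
The plan is to show that $G_n$ is Lipschitz as a map from $X^n$ with the product topology to $\mathbb{R}^+$, the Lipschitz estimate being taken with respect to the metric $(\mathbf{x},\mathbf{y})\mapsto\sum_{i=1}^n d_G(x_i,y_i)$ on $X^n$. Since $\Im(G_n)$ is precisely the topology of $d_G$, this product metric induces the product topology on $X^n$, so such an estimate yields joint continuity. Concretely, it suffices to prove
\begin{equation*}
\bigl|G_n(x_1,\dots,x_n)-G_n(y_1,\dots,y_n)\bigr|\ \le\ \sum_{i=1}^n d_G(x_i,y_i)\qquad\text{for all }x_i,y_i\in X .
\end{equation*}

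First I would establish a ``replace one coordinate at a time'' inequality. Applying [G 5] with auxiliary point $y_1$ gives
\begin{equation*}
G_n(x_1,x_2,\dots,x_n)\ \le\ G_n(x_1,y_1,\dots,y_1)+G_n(y_1,x_2,\dots,x_n);
\end{equation*}
then, using the permutation invariance [G 4] to bring $x_2$ into the leading slot and applying [G 5] with auxiliary point $y_2$,
\begin{equation*}
G_n(y_1,x_2,x_3,\dots,x_n)\ \le\ G_n(x_2,y_2,\dots,y_2)+G_n(y_1,y_2,x_3,\dots,x_n),
\end{equation*}
and so on. Iterating for $i=1,\dots,n$ and adding the resulting telescoping chain yields
\begin{equation*}
G_n(x_1,\dots,x_n)\ \le\ \sum_{i=1}^n G_n(x_i,y_i,\dots,y_i)+G_n(y_1,\dots,y_n).
\end{equation*}

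Next I would symmetrize: interchanging the roles of $\mathbf{x}$ and $\mathbf{y}$ in the displayed chain produces the same bound with the terms $G_n(y_i,x_i,\dots,x_i)$ in place of $G_n(x_i,y_i,\dots,y_i)$, so $\bigl|G_n(x_1,\dots,x_n)-G_n(y_1,\dots,y_n)\bigr|$ is at most the sum of the two right-hand sides, namely $\sum_{i=1}^n\bigl(G_n(x_i,y_i,\dots,y_i)+G_n(y_i,x_i,\dots,x_i)\bigr)$. By [G 4] we have $G_n(y_i,x_i,\dots,x_i)=G_n(x_i,\dots,x_i,y_i)$, so by the definition \eqref{met} of $d_G$ this last sum is exactly $\sum_{i=1}^n d_G(x_i,y_i)$, giving the claimed estimate. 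Finally, if $\langle x_i^{(m)}\rangle$ is $G_n$-convergent to $x_i$ for each $i=1,\dots,n$, then $d_G(x_i^{(m)},x_i)\to 0$ by Proposition~\ref{convsq}, and the estimate forces $G_n(x_1^{(m)},\dots,x_n^{(m)})\to G_n(x_1,\dots,x_n)$; hence $G_n$ is jointly continuous.

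I expect the telescoping inequality to be the only delicate point: [G 5] peels off only the \emph{first} argument, so at each stage the coordinate to be replaced must first be transported to the leading position by [G 4] before [G 5] is invoked, and one should note that [G 5] carries no distinctness hypothesis (unlike [G 3]), so the argument is unobstructed even when several of the coordinates coincide. The remaining steps are routine bookkeeping.
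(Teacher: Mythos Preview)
The paper does not actually contain a proof of this proposition; it is simply quoted from \cite{KHA2} without argument. Your proof is correct and is the natural one: the telescoping replacement of one coordinate at a time via [G~5] (after permuting that coordinate to the front by [G~4]) gives
\[
G_n(x_1,\dots,x_n)\le \sum_{i=1}^n G_n(x_i,y_i,\dots,y_i)+G_n(y_1,\dots,y_n),
\]
and combining this with the symmetric inequality yields the Lipschitz bound $|G_n(\mathbf{x})-G_n(\mathbf{y})|\le\sum_i d_G(x_i,y_i)$, from which joint continuity follows since $\Im(G_n)$ coincides with the $d_G$-topology. One cosmetic remark: you do not need to add the two right-hand sides. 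Each direction of the difference is already bounded by a sum of terms of the form $G_n(x_i,y_i,\dots,y_i)$ or $G_n(y_i,x_i,\dots,x_i)$, and each such term is individually dominated by $d_G(x_i,y_i)$; so the absolute value is at most $\sum_i d_G(x_i,y_i)$ directly, without the detour through the combined sum.
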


\begin{definition}\cite{KHA2} 
\label{cauchy} Let $(X,G_m)$ be a generalized $m$-metric space. A sequence $<x_n>$ in $X$ is said to be $G_m$-Cauchy if for every $\epsilon >0$, there exists $N\in \mathbb{N}$ such that
\begin{equation*}
G_m(x_{n_1},x_{n_2},...,x_{n_m})< \epsilon \; \text{for all} \; n_1,n_2,...,n_m\ge N
\end{equation*}
\end{definition}
\begin{proposition}\cite{KHA2}
Let $(X,G_m)$ be a generalized $m$-metric space. A sequence $<x_n>$ in $X$ is $G_m$-Cauchy if and only if for every $\epsilon >0$, there exists $N\in \mathbb{N}$ such that
\begin{equation}
\label{condcchy}
G_m(x_{n_1},x_{n_2},...,x_{n_2}) <\epsilon \; \text{for all} \; n_1,n_2\ge N
\end{equation}
\end{proposition}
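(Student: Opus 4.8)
The plan is to prove the two implications separately, the forward one being trivial and the reverse one carrying all the substance. For the forward direction, suppose $<x_n>$ is $G_m$-Cauchy in the sense of Definition~\ref{cauchy}; given $\epsilon>0$, take the $N$ supplied there and specialise $n_2=n_3=\dots=n_m$ in the defining inequality $G_m(x_{n_1},x_{n_2},\dots,x_{n_m})<\epsilon$ --- what is left is exactly \eqref{condcchy}.

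The reverse direction requires us to control a value $G_m(x_{n_1},x_{n_2},\dots,x_{n_m})$ with $m$ arbitrary indices $\ge N$ by finitely many ``two-index'' values of the kind in \eqref{condcchy}. The engine is the rectangle inequality [G 5], applied repeatedly with the extra point always taken to be $x_{n_2}$, together with the permutation invariance [G 4] and the vanishing axiom [G 1]. A first application of [G 5] gives
\begin{equation*}
G_m(x_{n_1},x_{n_2},x_{n_3},\dots,x_{n_m})\le G_m(x_{n_1},x_{n_2},\dots,x_{n_2})+G_m(x_{n_2},x_{n_2},x_{n_3},\dots,x_{n_m}),
\end{equation*}
which has peeled off one two-index term while ``replacing'' the first argument by a copy of $x_{n_2}$. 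Now use [G 4] to bring $x_{n_3}$ into the leading slot of the surviving term and apply [G 5] again with extra point $x_{n_2}$; iterating, at each stage one moves a not-yet-absorbed index to the front by [G 4] and removes it by [G 5] at the cost of one more two-index term, the surviving term acquiring another copy of $x_{n_2}$. After $m-1$ stages every argument of the surviving term equals $x_{n_2}$, so that term is $0$ by [G 1], and collecting the peeled pieces gives
\begin{equation*}
G_m(x_{n_1},x_{n_2},\dots,x_{n_m})\le G_m(x_{n_1},x_{n_2},\dots,x_{n_2})+\sum_{i=3}^{m}G_m(x_{n_i},x_{n_2},\dots,x_{n_2}),
\end{equation*}
a sum of $m-1$ terms, each of the form appearing in \eqref{condcchy} with both of its indices $\ge N$.

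To finish, given $\epsilon>0$ apply the hypothesis with $\epsilon/(m-1)$ in place of $\epsilon$ to obtain $N$; the displayed estimate then forces $G_m(x_{n_1},\dots,x_{n_m})<\epsilon$ whenever $n_1,\dots,n_m\ge N$, so $<x_n>$ is $G_m$-Cauchy. I expect the only real obstacle to be the bookkeeping in the iteration --- keeping an accurate count of the accumulated copies of $x_{n_2}$ and of which original index is being eliminated at each step --- so I would formalise that part as a short finite induction on the number of arguments not yet equal to $x_{n_2}$, with base case [G 1]. Note that neither the symmetry condition \eqref{symm} nor Proposition~\ref{ineq5} is needed, since only two-index values in the orientation of \eqref{condcchy} are ever produced.
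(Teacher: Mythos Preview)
Your argument is correct. Note, however, that the paper does not actually supply a proof of this proposition: it is stated with a citation to \cite{KHA2} and left unproved here, so there is nothing in the present paper to compare your approach against. For what it is worth, the route you take --- specialising for the forward direction, and for the converse iterating [G~5] with anchor point $x_{n_2}$, using [G~4] to bring each remaining index to the front and [G~1] to discard the final residual term --- is the standard one and yields the clean bound $G_m(x_{n_1},\dots,x_{n_m})\le \sum_{i\ne 2}G_m(x_{n_i},x_{n_2},\dots,x_{n_2})$ of $m-1$ two-index terms; the induction you propose on the number of arguments not yet equal to $x_{n_2}$ is the right way to formalise the bookkeeping.
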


\begin{proposition}\cite{KHA2}
Every $G_n$-convergent sequence in a generalized $n$-metric space is $G_n$-Cauchy.
\end{proposition}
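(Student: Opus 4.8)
The plan is to reduce an arbitrary $G_n$-value evaluated along the sequence to a sum of $n$ ``one point versus the limit'' terms, each of which tends to $0$, and then to read off the Cauchy condition. So let $\langle x_m\rangle$ be $G_n$-convergent to some $x\in X$; by Proposition~\ref{convsq}(4) we have $G_n(x_m,x,x,\dots ,x)\to 0$ as $m\to\infty$.

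The key auxiliary estimate I would prove first is that, for all $y_1,y_2,\dots ,y_n,a\in X$,
\[
G_n(y_1,y_2,\dots ,y_n)\le \sum_{i=1}^{n} G_n(y_i,a,a,\dots ,a).
\]
This follows by a short finite induction using only [G5] and [G4]. Applying [G5] with splitting point $a$ gives $G_n(y_1,\dots ,y_n)\le G_n(y_1,a,\dots ,a)+G_n(a,y_2,\dots ,y_n)$; then using [G4] to move $y_2$ into the first slot and applying [G5] again with splitting point $a$ peels off $G_n(y_2,a,\dots ,a)$ and leaves a $G_n$-value with one extra copy of $a$. After $n-1$ such steps the residual term is $G_n(a,\dots ,a,y_n)$, which by [G4] equals $G_n(y_n,a,\dots ,a)$, and summing the peeled-off terms yields the claimed bound.

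With this in hand the conclusion is immediate. Given $\epsilon>0$, pick $N$ so that $G_n(x_m,x,\dots ,x)<\epsilon/n$ for all $m\ge N$; then for any $n_1,n_2,\dots ,n_n\ge N$ the auxiliary inequality (with $a=x$, $y_i=x_{n_i}$) gives $G_n(x_{n_1},x_{n_2},\dots ,x_{n_n})<\epsilon$, which is exactly Definition~\ref{cauchy}. If one prefers to verify only the two-index criterion \eqref{condcchy}, it is a special case of this.

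The only point requiring care is the bookkeeping inside the induction for the auxiliary inequality — tracking how many copies of $a$ have accumulated and invoking the permutation axiom [G4] at each stage to reposition the coordinate about to be replaced by [G5]; everything else is routine. (Alternatively one could route through the metric $d_G$ of \eqref{met} and use Proposition~\ref{convsq}(2) together with the fact that convergent sequences in a metric space are Cauchy, but bounding the full $n$-variable expression $G_n(x_{n_1},\dots ,x_{n_n})$ still needs an inequality of the above shape, so the direct argument is the more economical one.)
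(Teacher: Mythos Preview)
The paper does not actually supply a proof of this proposition; it is merely quoted from \cite{KHA2}. So there is nothing in the present paper to compare against, and your argument stands on its own. It is correct: the auxiliary inequality
\[
G_n(y_1,\dots ,y_n)\le \sum_{i=1}^{n} G_n(y_i,a,\dots ,a)
\]
is obtained exactly as you describe by peeling off one coordinate at a time via [G5] and repositioning with [G4], and the Cauchy conclusion then follows at once from Proposition~\ref{convsq}(4).

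One small remark on your closing parenthetical: the $d_G$ route is in fact slightly cleaner than you suggest. Since $G_n(x_{n_1},x_{n_2},\dots ,x_{n_2})\le d_G(x_{n_1},x_{n_2})$ directly from \eqref{met}, and since $d_G$-convergent sequences are $d_G$-Cauchy, the two-index criterion \eqref{condcchy} comes for free without any appeal to your $n$-term auxiliary inequality. That said, your direct argument is self-contained and yields the full Definition~\ref{cauchy} form of the Cauchy condition rather than only the reduced version, which is a mild advantage.
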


\begin{definition}\cite{KHA2} A generalized $n$-metric space $(X,G_n)$ is said to be \emph{$G_n$-complete} if every $G_n$-Cauchy sequence in $(X,G_n)$ is $G_n$-convergent in $(X,G_n)$.
\end{definition}

\begin{definition} Let $(X,d)$ be a metric space. A mapping $T:X\to X$ is called a \textit{contraction} if there exists $r\in [0,1)$ such that $d(Tx,Ty)\le rd(x,y)$, for all $x,y\in X$. 
\end{definition}
\begin{theorem} \cite{BAN} (Banach Contraction Principle) If $(X,d)$ is a complete metric space, then every contraction $T$ on $X$ has a unique fixed point.
\end{theorem}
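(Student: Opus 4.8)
The plan is to follow the classical Picard iteration argument. First I would fix an arbitrary point $x_0 \in X$ and define the sequence of iterates $x_{m+1} = T x_m$ for $m \ge 0$. Applying the contraction condition repeatedly gives $d(x_{m+1}, x_m) \le r^m\, d(x_1, x_0)$, and then, for any $p > m$, the triangle inequality yields
\begin{equation*}
d(x_p, x_m) \le \sum_{k=m}^{p-1} d(x_{k+1}, x_k) \le \Bigl(\sum_{k=m}^{p-1} r^k\Bigr) d(x_1, x_0) \le \frac{r^m}{1-r}\, d(x_1, x_0).
\end{equation*}
Since $r \in [0,1)$, the right-hand side tends to $0$ as $m \to \infty$, so $\langle x_m \rangle$ is a Cauchy sequence in $(X,d)$.

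Next, by completeness, the sequence converges to some $x^{*} \in X$. To see that $x^{*}$ is a fixed point, I would use that a contraction is Lipschitz and hence continuous, so $T x_m \to T x^{*}$; but $T x_m = x_{m+1} \to x^{*}$, and by uniqueness of limits in a metric space $T x^{*} = x^{*}$. Equivalently, one can estimate directly $d(x^{*}, T x^{*}) \le d(x^{*}, x_{m+1}) + d(T x_m, T x^{*}) \le d(x^{*}, x_{m+1}) + r\, d(x_m, x^{*}) \to 0$.

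For uniqueness, suppose $x^{*}$ and $y^{*}$ are both fixed points of $T$. Then $d(x^{*}, y^{*}) = d(T x^{*}, T y^{*}) \le r\, d(x^{*}, y^{*})$, so $(1-r)\, d(x^{*}, y^{*}) \le 0$; since $1 - r > 0$ this forces $d(x^{*}, y^{*}) = 0$, i.e. $x^{*} = y^{*}$.

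The only nontrivial point is the geometric-series estimate establishing that the orbit is Cauchy; the passage to the limit, the verification that the limit is fixed, and uniqueness are all immediate from the metric axioms and completeness. Since this is a classical theorem, the argument is entirely standard, and within the present paper it mainly serves as the template whose structure I will mirror when proving the corresponding contraction principle in a $G_n$-complete generalized $n$-metric space, where the analogous Cauchy estimate must instead be carried out through the axiom [G 5] and Proposition~\ref{ineq5}.
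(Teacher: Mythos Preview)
Your argument is correct and is the standard Picard-iteration proof of the Banach Contraction Principle. Note, however, that the paper does not supply its own proof of this statement: it is merely quoted (with citation to \cite{BAN}) as background before Suzuki's generalization, so there is no proof in the paper to compare against. That said, your outline matches exactly the template the paper \emph{does} carry out in Theorem~\ref{banach} for the generalized $n$-metric setting (iterate, geometric estimate via repeated contraction, telescoping via the rectangle-type inequality to get Cauchy, pass to the limit, then uniqueness by the same contraction inequality), so your closing remark about mirroring the structure is on point.
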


Many fixed point theorems have been proved as generalizations of Banach fixed point theorem.   The following remarkable generalization is due to Suzuki \cite{SUZ}.
\begin{theorem} ( Suzuki \cite{SUZ} ) \label{thmsuz}
 Let $(X,d)$ be a complete metric space and let $T$ be a mapping on $X$. Define a nonincreasing function $\theta:[0,1)\to (1/2,1]$ by
 
 \begin{equation}\label{condsuz1}
 \theta(r)=
 \begin{cases}
  1 \qquad \qquad  &\text{if } 0\le r \le (\sqrt{5}-1)/2,\\
  
  (1-r)r^{-2} \qquad  &\text{if } (\sqrt{5}-1)/2\le r\le 2^{-1/2},\\
  (1+r)^{-1} \qquad  &\text{if } 2^{-1/2}\le r<1.
  \end{cases}
  \end{equation}
  Assume that there exists $r\in [0,1)$ such that 
  $\theta(r)d(x,Tx)\le d(x,y)$ implies $d(Tx,Ty)\le rd(x,y)$ for all $x,y\in X$. Then there exists a unique fixed point $z$ of $T$. Moreover $\lim_n {T^n}x=z$ for all $x\in X$.
\end{theorem}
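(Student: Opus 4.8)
The plan is to reconstruct Suzuki's original argument. \emph{First}, I would exploit $\theta(r)\le 1$: then $\theta(r)\,d(x,Tx)\le d(x,Tx)$ for every $x\in X$, so taking $y=Tx$ in the hypothesis gives $d(Tx,T^{2}x)\le r\,d(x,Tx)$. Fixing $x_{0}\in X$ and writing $x_{m}=T^{m}x_{0}$, iteration yields $d(x_{m},x_{m+1})\le r^{m}d(x_{0},x_{1})$, hence $\sum_{m}d(x_{m},x_{m+1})<\infty$ and $\{x_{m}\}$ is Cauchy; completeness produces a limit $z$, and it remains to show $Tz=z$.

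\emph{Next}, the crux: proving $z$ is fixed. The main tool is the dichotomy that for each $m$ at least one of $\theta(r)\,d(x_{m},Tx_{m})\le d(x_{m},z)$ and $\theta(r)\,d(x_{m+1},Tx_{m+1})\le d(x_{m+1},z)$ must hold, since if both failed the triangle inequality would force $d(x_{m},x_{m+1})<\theta(r)(1+r)\,d(x_{m},x_{m+1})$. When $\theta(r)(1+r)\le 1$, which is exactly the band $r\ge 2^{-1/2}$ where $\theta(r)=(1+r)^{-1}$, this is already a contradiction; so the dichotomy holds there, and feeding the surviving inequality into the hypothesis and letting $m\to\infty$ (using $x_{m}\to z$ and $d(x_{m},x_{m+1})\to 0$) gives $d(z,Tz)\le r\cdot 0=0$. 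For $r<2^{-1/2}$ the dichotomy alone is insufficient, so I would argue by contradiction: if $\alpha:=d(z,Tz)>0$ then $d(x_{m},Tz)\to\alpha>0$ while $d(x_{m},Tx_{m})\to 0$, so for large $m$ the trigger $\theta(r)\,d(x_{m},Tx_{m})\le d(x_{m},Tz)$ is met; applying the hypothesis to $(x_{m},Tz)$ and passing to the limit gives $d(z,T^{2}z)\le r\alpha$, and since also $d(Tz,T^{2}z)\le r\alpha$ this yields $\alpha\le 2r\alpha$. Iterating this device along $T^{2}z,T^{3}z,\dots$ and invoking the precise shape of $\theta$ — in particular $r^{2}+r\le 1$ on $[0,(\sqrt5-1)/2]$, where $\theta(r)=1$, and $\theta(r)=(1-r)r^{-2}$ on the middle band — should dispose of the remaining regimes and give $Tz=z$.

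\emph{Finally}, the easy parts. Uniqueness: if $Tw=w$ then $\theta(r)\,d(w,Tw)=0\le d(w,z)$, so $d(w,z)=d(Tw,Tz)\le r\,d(w,z)$ and hence $w=z$. Global convergence: for arbitrary $x\in X$ the first step shows $\{T^{m}x\}$ converges to a fixed point, which by uniqueness is $z$, so $\lim_{m}T^{m}x=z$. I expect the second step to be the only genuine obstacle: no single triangle-inequality estimate works for all $r\in[0,1)$ simultaneously — which is precisely why $\theta$ is defined in three pieces — so the argument has to be split into regimes, each calibrated so that the relevant quadratic inequality in $r$ points the right way.
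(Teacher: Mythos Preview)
The paper does not prove this statement: Theorem~1.20 is quoted from Suzuki's original article and used as a black box (in the symmetric case of Theorem~2.2 the paper simply reduces to it). So there is no ``paper's own proof'' to compare your attempt against.

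That said, your sketch follows the outline of Suzuki's original argument. The Cauchy--sequence step, uniqueness, and global convergence are correct and routine. Your dichotomy argument for the regime $r\ge 2^{-1/2}$ is also correct, and is in fact precisely the mechanism the present paper adapts in its proof of Theorem~2.2 (note that Theorem~2.2 only treats the single branch $\theta(r)=(1+r)^{-1}$, so the paper never needs the other two regimes).

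Where your sketch is thin is the fixed--point step for $r<2^{-1/2}$. Your estimate $\alpha\le 2r\alpha$ only dispatches $r<1/2$, leaving the range $1/2\le r<2^{-1/2}$ unhandled; the phrase ``iterating this device along $T^{2}z,T^{3}z,\dots$'' does not by itself close that gap. In Suzuki's actual proof one first establishes the auxiliary inequality $d(Tx,z)\le r\,d(x,z)$ for every $x\neq z$ (via the orbit, exactly as you suggest), and then runs an induction on $d(T^{j}z,z)$: the point is to show that the trigger $\theta(r)\,d(T^{j}z,T^{j+1}z)\le d(T^{j}z,z)$ is met, which for $\theta(r)=1$ uses $r^{2}+r\le 1$ and for $\theta(r)=(1-r)r^{-2}$ uses $2r^{2}\le 1$, in each case calibrated so that the resulting chain of inequalities forces $T^{j}z=z$ for some $j$ and hence $Tz=z$. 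Your proposal gestures at this but does not supply it; filling in that induction is the only genuine work left.
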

\section{main results}

First we prove the Banach fixed point theorem in the framework of generalized n-metric space.

\begin{theorem} \label{banach}
Let $(X,G_r)$ be a complete generalized $r$-metric space and let $T:X\to X$ be a mapping satisfying the following condition for all $x_1,x_2,\dots x_r\in X$
\begin{equation} \label{eqban}
G_r(Tx_1,Tx_2,\dots,Tx_r)\le kG_r(x_1,x_2,\dots, x_r)
\end{equation}
where $k\in [0,1)$. Then $T$ has a unique fixed point.
\end{theorem}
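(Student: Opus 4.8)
The strategy mimics the classical Banach argument: pick an arbitrary $x_0 \in X$, define the Picard iterates $x_{m+1} = Tx_m$, show this sequence is $G_r$-Cauchy, invoke completeness to get a limit $z$, then show $z$ is a fixed point and that it is unique. The contraction hypothesis \eqref{eqban} is stated on the full $r$-tuple, so the first move is to specialize it to the "diagonal-type" arguments that the axioms [G 1]--[G 5] let us work with. Concretely, setting $x_1 = x_{m}$ and $x_2 = \dots = x_r = x_{m-1}$ (or the reverse) in \eqref{eqban} gives $G_r(x_{m+1}, x_m, \dots, x_m) \le k\, G_r(x_m, x_{m-1}, \dots, x_{m-1})$, and iterating yields $G_r(x_{m+1}, x_m, \dots, x_m) \le k^m\, G_r(x_1, x_0, \dots, x_0)$. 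This is the key estimate.

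Next I would turn this into a Cauchy estimate. Using the rectangle inequality [G 5] repeatedly (a telescoping chain through intermediate points $x_{m+1}, x_{m+2}, \dots, x_{l-1}$), together with the permutation symmetry [G 4], one bounds $G_r(x_m, x_l, \dots, x_l)$ — or, more conveniently, $G_r(x_l, x_m, \dots, x_m)$ — by a sum $\sum_{i=m}^{l-1} c_i\, G_r(x_{i+1}, x_i, \dots, x_i)$ for suitable constants $c_i$ coming from the (possibly asymmetric) triangle inequality; Proposition~\ref{inequalty} is available to trade $G_r(x,y,\dots,y)$ against $G_r(y,x,\dots,x)$ up to the factor $(n-1)$ whenever the tuple needs reorienting. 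Since each such term is $\le c\, k^i\, G_r(x_1,x_0,\dots,x_0)$ with $k < 1$, the tail $\sum_{i\ge m} c\, k^i$ goes to $0$, so by the characterization of $G_r$-Cauchy sequences in \eqref{condcchy} the sequence $\langle x_m \rangle$ is $G_r$-Cauchy. Completeness gives $x_m \to z$ in the sense of Proposition~\ref{convsq}.

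To see $Tz = z$: by Proposition~\ref{convsq}(4), $G_r(x_m, z, \dots, z) \to 0$; applying \eqref{eqban} with appropriate substitutions gives $G_r(Tx_m, Tz, \dots, Tz) = G_r(x_{m+1}, Tz, \dots, Tz) \le k\, G_r(x_m, z, \dots, z) \to 0$, so $x_{m+1} \to Tz$ as well. Since limits in the induced metric topology are unique (the space is topologically a metric space via $d_G$), $Tz = z$. For uniqueness, if $Tw = w$ too, then \eqref{eqban} with $x_1 = z$, $x_2 = \dots = x_r = w$ gives $G_r(z, w, \dots, w) \le k\, G_r(z, w, \dots, w)$, forcing $G_r(z,w,\dots,w) = 0$; combined with [G 2] this yields $z = w$.

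The main obstacle I anticipate is the telescoping Cauchy estimate: the triangle axiom [G 5] has the special point $x_{n+1}$ sitting in a specific slot and is not obviously symmetric, so care is needed to keep every intermediate $G_r$-term in a form to which the geometric bound $k^i\, G_r(x_1,x_0,\dots,x_0)$ applies — this is precisely where [G 4] (permutation invariance) and Proposition~\ref{inequalty} (the $(n-1)$-factor comparison) must be deployed cleanly so that the constants $c_i$ stay bounded and the series still converges. Everything else is routine.
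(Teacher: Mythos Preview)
Your proposal is correct and follows the same route as the paper: Picard iterates, geometric decay of $G_r(y_m,y_{m+1},\dots,y_{m+1})$, telescoping via [G 5] to obtain the Cauchy estimate, completeness, and then the contraction pins down the fixed point and its uniqueness. The obstacle you flag does not in fact arise: with the orientation $G_r(y_m,y_{m+1},\dots,y_{m+1})$ (lower index in the first slot), axiom [G 5] applied with intermediate point $y_{m+1}$ telescopes directly with constant~$1$ in every term, so neither Proposition~\ref{inequalty} nor any nontrivial use of [G 4] is needed.
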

\begin{proof}
Let $y_0$ be an arbitrary point in $X$. Consider a sequence $<y_n>$ in $X$ such that $y_n=T^n y_0$.

using the condition ~\ref{eqban} we have 

\begin{align*}
\begin{split}
G_r(Ty_{n-1},Ty_n,\dots,Ty_n)&\le kG_r(y_{n-1},y_n,\dots, y_n)\\
\text{or} \quad G_r(y_n,y_{n+1},\dots,y_{n+1})&\le  kG_r(y_{n-1},y_n,\dots, y_n)
\end{split}
\end{align*} 
By the repeated application of condition ~\ref{eqban}, we have
\begin{equation*}
G_r(y_n,y_{n+1},\dots,y_{n+1})\le  k^nG_r(y_0,y_1,\dots, y_1)
\end{equation*}

Now we claim that the sequence $<y_n>$ in $X$ is $G_r$-Cauchy sequence in $X$.\\
For all natural numbers $n$ and $m(>n)$ we have from [G 5]
\begin{align*}
\begin{split}
G_r(y_n,y_m,\dots,y_m)&\le G_r(y_n,y_{n+1},\dots,y_{n+1})+G_r(y_{n+1},y_{n+2},\dots,y_{n+2})+\dots\\
&\qquad \cdots +G_r(y_{m-1},y_m,\dots ,y_m)\\
&\le (k^n+k^{n+1}+\dots +k^{m-1})\,G_r(y_0,y_1,\dots ,y_1)\\
&\le (k^n+k^{n+1}+\dots)\,G_r(y_0,y_1,\dots ,y_1)\\
&=\frac{k^n}{1-k}\,G_r(y_0,y_1,\dots ,y_1)\rightarrow 0 \, \text{as} \, n,m \rightarrow \infty
\end{split}
\end{align*}
Hence the sequence $<y_n>$ is a $G_r$-Cauchy sequence in $X$. By completeness of $(X,G_r)$, there exists a point $u\in X$ such that $<y_n>$ is $G_r$-convergent to $u$.
Suppose that $Tu\ne u$, then 
\begin{align*}
\begin{split}
G_r(Tu,\dots,Tu,y_n)&= G_r(Tu,\dots,Tu,Ty_{n-1})\\
&\le k\,G_r(u,\dots ,u,y_{n-1})\\
\end{split}
\end{align*}
Taking the limit as $n\to \infty$, we have
\begin{align*}
\begin{split}
G_r(Tu,\dots,Tu,u)&\le k G_r(u,\dots,u,u)=0\\
\text{or}  \qquad  G_r(Tu,\dots,Tu,u)&\le 0\\
\end{split}
\end{align*}
But from [G 2] $G_r(Tu,\dots,Tu,u)>0$. Thus we get a contradiction. Hence we have $u=Tu$. For uniqueness of $u$, suppose that $v\neq u$ is such that $Tv=v$. Then we have
\begin{align*}
\begin{split}
G_r(u,v,\dots ,v)=G_r(Tu,Tv, \dots ,Tv)\le k\,G_r(u,v,\dots ,v)<G_r(u,v,\dots ,v)
\end{split}
\end{align*} 
Since $k\in [0,1)$. Thus we get a contradiction, hence we have $u=v$.

\end{proof}
The theorem ~\ref{thmsuz} initiated a lot of research work in the form of various variations, refinements and generalizations. We shall now prove a similar theorem in generalized $n$-metric spaces.
\begin{theorem} \label{suz}
 Let $(X,G_n)$ be a complete $G_n$-metric space and let $T$ be a mapping on $X$. Define a strictly decreasing function $\theta$ from $[0,1)$ onto $(1/2,1]$ by $\theta(r)=\frac{1}{1+r}$. Assume that there exists $r\in[0,1)$ such that for every $u,v\in X$, the inequality
\begin{align}\label{condsuz2}
\begin{split}
&\theta(r)G_n(u,Tu,\dots ,Tu)\le G_n(u,v,\dots,v)\\
\text{implies}\qquad&\;G_n(Tu,Tv,\dots,Tv)\le rG_n(u,v,\dots,v)
\end{split}
\end{align}
Then there exists a unique fixed point $y$ of $T$, i.e. $Ty=y$. Moreover $T$ is $G_n$-Continuous at $y$.
\end{theorem}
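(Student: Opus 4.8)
The plan is to imitate Suzuki's original argument, noting that the value $\theta(r)=\frac{1}{1+r}$ is engineered precisely so that $\theta(r)(1+r)=1$. Throughout I would write $[a,b]:=G_n(a,b,\dots,b)$ (one copy of $a$ and $n-1$ copies of $b$); then [G 5] specialises to the directed triangle inequality $[a,c]\le[a,b]+[b,c]$, Proposition~\ref{inequalty} reads $[a,b]\le(n-1)[b,a]$, and by [G 1], [G 2] one has $[a,b]\ge 0$ with equality iff $a=b$.

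\emph{Step 1 (Picard iterates).} Fix $x_0\in X$ and put $x_m=T^mx_0$. Since $\theta(r)\le 1$ we have $\theta(r)[x_m,Tx_m]=\theta(r)[x_m,x_{m+1}]\le[x_m,x_{m+1}]$, so the premise of \eqref{condsuz2} holds for $(u,v)=(x_m,x_{m+1})$ and gives $[x_{m+1},x_{m+2}]\le r[x_m,x_{m+1}]$; hence $[x_m,x_{m+1}]\le r^m[x_0,x_1]$. Exactly as in the proof of Theorem~\ref{banach} (telescoping with [G 5] and summing a geometric series), $\langle x_m\rangle$ is $G_n$-Cauchy, so by completeness $x_m\to z$ for some $z\in X$; by Proposition~\ref{convsq} this means $[x_m,z]\to 0$ and $[z,x_m]\to 0$.

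\emph{Step 2 (dichotomy lemma).} I would prove that for every $m$ at least one of
\[\theta(r)[x_m,x_{m+1}]\le[x_m,z]\qquad\text{or}\qquad\theta(r)[x_{m+1},x_{m+2}]\le[x_{m+1},z]\]
holds. Arguing by contradiction, if both fail then $[x_m,z]<\theta(r)[x_m,x_{m+1}]$ and $[x_{m+1},z]<\theta(r)[x_{m+1},x_{m+2}]\le\theta(r)r[x_m,x_{m+1}]$; feeding these together with $[x_m,x_{m+1}]\le[x_m,z]+[z,x_{m+1}]$ and $\theta(r)(1+r)=1$ one wants to collapse to $[x_m,x_{m+1}]<[x_m,x_{m+1}]$. \textbf{This is the step I expect to be the main obstacle:} because $G_n$ need not be symmetric, the triangle inequality introduces $[z,x_{m+1}]$, whereas the second failed inequality only controls $[x_{m+1},z]$, and converting between them via Proposition~\ref{inequalty} costs a factor $n-1$ that is not absorbed by the sharp identity $\theta(r)(1+r)=1$. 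To close it I would try to use more than the bare bounds — for instance the refined decay $[z,x_m]\le\frac{(n-1)r^m}{1-r}[x_0,x_1]$ obtained by telescoping $[z,x_j]\le[z,x_{j+1}]+(n-1)[x_j,x_{j+1}]$ — or, failing that, to invoke symmetry of $G_n$ in the sense of \eqref{symm}, for which $[a,b]=[b,a]$ and Suzuki's estimate goes through verbatim.

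\emph{Step 3 (fixed point, uniqueness, continuity).} Granting Step 2, one of the two alternatives holds for infinitely many $m$; applying \eqref{condsuz2} with $(u,v)=(x_m,z)$ (resp.\ $(x_{m+1},z)$) along such a subsequence gives $[x_{m+1},Tz]\le r[x_m,z]\to 0$ (resp.\ $[x_{m+2},Tz]\le r[x_{m+1},z]\to 0$), so this subsequence of iterates is $G_n$-convergent to $Tz$ as well as to $z$; since the $G_n$-topology is metrizable, hence Hausdorff, $Tz=z$. For uniqueness, if $Tw=w$ then $\theta(r)[w,Tw]=\theta(r)G_n(w,\dots,w)=0\le[w,z]$ by [G 1], so \eqref{condsuz2} yields $[w,z]=[Tw,Tz]\le r[w,z]$, forcing $[w,z]=0$, i.e.\ $w=z$ by [G 2]. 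Finally, since $Tz=z$ we have, for every $v\in X$, $\theta(r)G_n(z,Tz,\dots,Tz)=\theta(r)G_n(z,\dots,z)=0\le G_n(z,v,\dots,v)$, so \eqref{condsuz2} gives $[z,Tv]\le r[z,v]$ for all $v$; hence whenever $v_m\to z$ we get $[z,v_m]\to 0$, so $[z,Tv_m]\to 0$, i.e.\ $Tv_m\to z=Tz$, and by Proposition~\ref{cont} the map $T$ is $G_n$-continuous at $z$.
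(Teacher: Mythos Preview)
Your Steps 1 and 3 are essentially the paper's arguments, and you have correctly located the only real difficulty. The fix you are missing is that the standing hypothesis $n\ge 3$ (which you never invoke) is exactly what absorbs the asymmetry cost. In the contradiction argument the paper does not try to convert $[x_{m+1},z]$ into $[z,x_{m+1}]$; rather, it negates the \emph{weaker} second alternative $\frac{1}{\,n-1\,}\theta(r)[x_{m+1},x_{m+2}]\le[z,x_{m+1}]$ (a consequence of the original alternative via Proposition~\ref{inequalty}). That negation gives $[z,x_{m+1}]<\frac{r}{\,n-1\,}\theta(r)[x_m,x_{m+1}]$ directly in the ``right'' orientation, and the directed triangle inequality then yields
\[
[x_m,x_{m+1}]\;<\;\theta(r)\Bigl(1+\frac{r}{\,n-1\,}\Bigr)[x_m,x_{m+1}]\;=\;\frac{1+r/(n-1)}{1+r}\,[x_m,x_{m+1}],
\]
a strict contradiction precisely because $n\ge 3$ forces $r/(n-1)<r$. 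So the identity $\theta(r)(1+r)=1$ is not used sharply; the slack $n-1\ge 2$ does the work, and neither your refined decay estimates nor an added symmetry hypothesis is needed.

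Structurally the paper also differs from your outline in two ways you should be aware of. First, it disposes of the symmetric case separately, observing that then $d_G(x,y)=2G_n(x,y,\dots,y)$ and quoting Suzuki's Theorem~\ref{thmsuz} for $(X,d_G)$ directly. Second, in the non-symmetric case, \emph{before} the dichotomy it inserts an additional intermediate estimate: using the convergence $y_m\to y$ together with a threshold $h>n^2-n+1$, it shows $\theta(r)[y_m,Ty_m]\le[y_m,x]$ for all large $m$ and all $x\ne y$, and hence (applying \eqref{condsuz2} with $(u,v)=(y_m,x)$ and letting $m\to\infty$) obtains $G_n(y,Tx,\dots,Tx)\le rG_n(y,x,\dots,x)$ for every $x\ne y$. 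Your outline skips this contraction-at-the-limit step entirely.
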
 
\begin{proof} Let us first assume that $(X,G_n)$ is symmetric, i.e. condition (equation) ~\ref{symm} holds. Then from relation ~\ref{met} we have
\begin{equation}
d_G(x,y)=2G_n(x,y,...,y)
\end{equation}
condition ~\ref{condsuz2} gives
\begin{equation}
\theta(r)d_G(x,Tx)\le d_G(x,y)\; \text{implies} \; d_G(Tx,Ty)\le rd_G(x,y)
\end{equation} 
Then the metric space $(X,d_G)$ satisfies the conditions of Theorem ~\ref{thmsuz} with $\theta(r)=\frac{1}{1+r}$ the required decreasing function. Therefore from Theorem~\ref{thmsuz}, $T$ has a unique fixed point.

Now suppose that $(X,G_n)$ is not symmetric. Since $\theta(r)\le 1$, We have \\
$\; \; \theta(r)G_n(x,Tx,...,Tx)
\le G_n(x,Tx,...,Tx)$ for all $x\in X$. Hence by condition~\ref{condsuz2} of the theorem, this implies that for all $x\in X$, we have
\begin{equation}
G_n(Tx,T^2x,...,T^2x)\le rG_n(x,Tx,...,Tx)
\end{equation}
Let $y_0\in X$. Define a sequence $<y_m>$ in $X$ such that $y_n=T^mx_0$. Then We have
\begin{align}
\begin{split}
G_n(y_m, y_{m+1},...,y_{m+1})&= G_n(T^my_0,T^{m+1}y_0,...,T^{m+1}y_0)\\
&\le rG_n(T^{m-1}y_0,T^my_0,...,T^my_0)\\
&\vdots \\
&\le r^mG_n(y_0,Ty_0,...Ty_0)
\end{split}
\end{align}
Following the proof of Theorem ~\ref{banach}, we can show that the sequence $<y_m>$ in $X$ is $G_n$-Cauchy in $X$. By completeness of $(X,G_n)$, there exists a point $y\in X$ such that $<y_m>$ is $G_n$-convergent to $y$. Thus there exists a natural number $k$ and $h>1$ such that
for all $m\ge k$, $x(\ne y)\in X$ we have
\begin{align*}
\begin{split}
G_n(y_m,y,y,...,y)&\le \frac{1}{h}G_n(x,y,...,y)\\
\text{and} \quad G_n(y_m,y_m,...,y_m,y)&\le \frac{1}{h}G_n(x,y,...,y) 
\end{split}
\end{align*} 
Then We have 
\begin{align*}
\begin{split}
\theta(r)G_n(y_m,Ty_m,...,Ty_m)&\le \frac{1}{h}G_n(y_m,Ty_m,...,Ty_m)\\
&=G_n(y_m,y_{m+1},...,y_{m+1})\\
& \le G_n(y_m,y,...,y)+G_n(y,y_{m+1},....,y_{m+1})\\
&\le G_n(y_n,y,...,y)+(n-1)G_n(y_{m+1},...,y_{m+1},y)\\
&\le \frac{1}{h}G_n(x,y,...,y)+\frac{(n-1)}{h}G_n(x,y,...,y)\\
&=\frac{n}{h}G_n(x,y,...,y)\\
&= \frac{n}{h-1}\Big[G_n(x,y,...,y)-\frac{1}{h}G_n(x,y,...,y)\Big]\\
&\le \frac{n}{h-1}\Big[G_n(x,y,...,y)-G_n(y_m,y,...,y)\Big]\\
&\le \frac{n}{h-1}G_n(x,y_m,...,y_m)\\
&\le \frac{n}{h-1}(n-1)G_n(y_m,x,...,x)\quad \text{by proposition ~\ref{ineq5}}\\
\end{split}
\end{align*}
If we choose $h>n^2-n+1$, then we have
\begin{equation*}
\theta(r)G_n(y_m,Ty_m,...,Ty_m)<G_n(y_m,x,...,x)
\end{equation*}
Hence by hypothesis (relation ~\ref{condsuz2}), We have 
\begin{align*}
\begin{split}
G_n(Ty_m,Tx,...,Tx)&\le rG_n(y_m,x,...,x)\\
\text{or}\quad G_n(y_{m+1},Tx,...,Tx)&\le rG_n(y_m,x,..,x)\; \text{for all}\;m\ge k
\end{split}
\end{align*}
Making $m\to \infty$, We have
\begin{equation*}
G_n(y,Tx,...,Tx)\le rG_n(y,x,...,x)\; \text{for all} \; x\in X\; \text{with}\;x\ne y
\end{equation*}
We now prove that $y$ is a fixed point of $T$. 

On the contrary, suppose that $Ty\ne y$. 
We claim that 
\begin{align*}
\begin{split}
\text{either}\;&\theta(r)G_n(x,Tx,...,Tx)\le G_n(x,z,...,z)\\
\text{or}\; & \theta(r)G_n(Tx,T^2x,...,T^2x)\le G_n(Tx,z,...,z)\; \text{for every}\;x,z\in X.
\end{split}
\end{align*}
or in light of inequality~\ref{ineq1} we have
\begin{align*}
\begin{split}
\text{either}\;&\theta(r)G_n(x,Tx,...,Tx)\le G_n(x,z,...,z)\\
\text{or}\; & \theta(r)G_n(Tx,T^2x,...,T^2x)\le (n-1)G_n(z,Tx,...,Tx)\; \text{for every}\;x,z\in X.\\
\text{i.e., either}\;&\theta(r)G_n(x,Tx,...,Tx)\le G_n(x,z,...,z)\\
\text{or}\;  \frac{1}{n-1}& \theta(r)G_n(Tx,T^2x,...,T^2x)\le G_n(z,Tx,...,Tx)\; \text{for every}\;x,z\in X.
\end{split}
\end{align*}

For if $\theta(r)G_n(x,Tx,...,Tx)>G_n(x,z,...,z)$ or $\frac{1}{n-1}\theta(r)G_n(Tx,T^2x,...,T^2x) >G_n(z,Tx,...,Tx)$. Then we have 
\begin{align*}
\begin{split}
G_n(x,Tx,...,Tx)&\le G_n(x,z,...,z)+G_n(z,Tx,...,Tx)\\
&<\theta(r)G_n(x,Tx,...,Tx)+\frac{1}{n-1}\theta(r)G_n(Tx,T^2x,...,T^2x)\\
&=\theta(r)\Big[G_n(x,Tx,...,Tx)+\frac{1}{n-1}G_n(Tx,T^2x,...,T^2x)\Big ]\\
&\le \theta(r)\Big[G_n(x,Tx,...,Tx)+\frac{1}{n-1}rG_n(x,Tx,...,Tx)\Big]\\
&=\theta(r)G_n(x,Tx,...,Tx)\Big[1+\frac{r}{n-1}\Big]\\
&=\Bigg(\frac{1+\frac{r}{n-1}}{1+r}\Bigg)G_n(x,Tx,...,Tx)
\end{split}
\end{align*}
Since we have $n\ge 3$, therefore we get $G_n(x,Tx,...,Tx)<G_n(x,Tx,...,Tx)$, a contradiction. Thus our claim that for $x,z\in X$,
\begin{align*}
\begin{split}
\text{either}\;&\theta(r)G_n(x,Tx,...,Tx)\le G_n(x,z,...,z)\\
\text{or}\; & \theta(r)G_n(Tx,T^2x,...,T^2x)\le G_n(Tx,z,...,z)\; \text{is true}.
\end{split}
\end{align*}
This implies that either
\begin{align*}
\begin{split}
&\theta(r)G_n(y_{2m},Ty_{2m},...,Ty_{2m})\le G_n(y_{2m},y,...,y)\\
\text{or}\; & \theta(r)G_n(y_{2m+1},Ty_{2m+1},...,Ty_{2m+1})\le G_n(y_{2m+1},y,...,y)\; \text{for every}\;m\in \mathbb{N}.
\end{split}
\end{align*} 
Therefore the condition~\ref{condsuz2} of the theorem implies that either
\begin{align*}
\begin{split}
&G_n(y_{2m+1},Ty,...,Ty)\le rG_n(y_{2m},y,...,y)\\
\text{or}\; & G_n(y_{2m+2},Ty,...,Ty)\le r G_n(y_{2m+1},y,...,y)\; \text{ holds for every}\;m\in \mathbb{N}.
\end{split}
\end{align*} 
Now $y_m\to y$, the above inequalities imply that there exists a subsequence of the sequence $<y_m>$ which converges to $Ty$. Thus we have $Ty=y$ contradicting our initial assumption. Hence $Ty=y$.

For uniqueness of $y$, suppose that $u\neq y$ is such that $Tu=u$. Then we have $G_n(y,u,\dots ,u)>0$ and $\theta(r)G_n(y,Ty,...,Ty)=0$ satisfying the condition 
$\theta(r)G_n(y,Ty,...,Ty)\le G_n(y,u,\dots ,u)$. By using condition ~\ref{condsuz2}, we get
\begin{equation*}
G_n(y,u,...,u)=G_n(Ty,Tu,...,Tu)\le rG_n(y,u,..,u)<G_n(y,u,...,u)
\end{equation*}
Thus we get a contradiction, hence we have $y=u$.
To prove the $G_n$-continuity (i.e. generalized $n$-continuity) of $T$ at $y$, We use the proposition ~\ref{cont}. Consider any sequence $<u_m>$ converging (i.e. $G_n$-convergent) to $y\in X$. Then we have 
\begin{equation*}
\theta(r)G_n(y,Ty,...,Ty)=0\le G_n(y,u_m,...,u_m)
\end{equation*}
Which implies that 
\begin{equation*}
G_n(Ty,Tu_m,...,Tu_m)\le rG_n(y,u_m,...,u_m)
\end{equation*}
i.e., $G_n(y,Tu_m,...,Tu_m)\le rG_n(y,u_m,...,u_m)$. 

Making $m\to \infty$, We get
\begin{equation*}
\lim_{m\to \infty} G_n(y,Tu_m,...,Tu_m)=0.
\end{equation*}
Hence $Tu_m\to y $, i.e. the sequence $<Tu_m>$ is Generalized $n$-convergent to $y(=Ty)$. Therefore by proposition ~\ref{cont}, the mapping $T$ is $G_n$-Continuous at $y$.
\end{proof}
\section{Application to functional equations}
Some functional equations arise in multistage decision processes where the origin of the theory of dynamic programming lies(\cite{BELL1},\cite{BELL2}). The existence and uniqueness of the solutions of these functional equations have been studied by several authors (\cite{BHA1},\cite{KAL1},\cite{LIU},\cite{SAL},\cite{SIN}) using fixed point theorems. In this section, we study the existence of solution of one such functional equation using theorem ~\ref{suz}.\par
\par Suppose that $U$ and $V$ are Banach spaces. Let $S\subset U$ be the state space and $D\subset V$ be the decision space. Let us denote a state vector by $x$ and a decision vector by $y$. Let $g\colon S\times D \to \mathbb{R}$, $M\colon S\times D\times \mathbb{R} \to \mathbb{R}$ be the bounded functions and $\tau \colon S\times D \to S$ be the transformation of decision process.\par 
The return function $f\colon S\to \mathbb{R}$ of the continuous decision process is defined by the functional equation
\begin{equation}\label{funceq}
f(x)=\sup_{y\in D}\big[g(x,y)+M\big(x,y,f(\tau (x,y))\big)\big],\,\, x\in S
\end{equation}
Let $B(S)$ be the set of all real valued bounded functions on $S$. For $\psi,\phi \in B(S)$, let
\begin{equation*}
d(\psi,\phi)=\sup\{ | \psi(x)-\phi(x)|:x\in S\}
\end{equation*}
Obviously $d$ is a metric on $B(S)$ and $(B(S),d)$ is a complete metric space. Let us denote $B(S)$ by $X$. If we define $G_n:X^n\to \mathbb{R}^+\,(n\ge 3)$  by
\begin{equation*}
G_n(\psi_1,\psi_2,\dots ,\psi_n)=\max \{d(\psi_p,\psi_q):1\le p<q\le n\}
\end{equation*}
Then $(X,G_n)$ is a $G_n$-complete generalized $n$-metric space. Let $\theta$ be the function as defined in theorem ~\ref{suz} and $T\colon X \to X$ be the mapping defined by
\begin{equation}\label{funceq1}
T(\psi(x))=\sup_{y\in D}\big[g(x,y)+M\big(x,y,\psi(\tau (x,y))\big)\big],\,\, x\in S, \psi \in X
\end{equation}
Then the existence and uniqueness of the solution of functional equation ~\ref{funceq} are established by the following result:
\begin{theorem}
Suppose that there exists $r\in [0,1)$ such that for every $(x,y) \in S\times D$, $\psi, \phi \in X$ and $t\in S$, the inequality
\begin{equation}\label{condfunc}
\theta(r)G_n(\psi, T\psi, \dots , T\psi)\le G_n(\psi,\phi,\dots, \phi)
\end{equation}
implies
\begin{equation}
|M(x,y,\psi(t))-M(x,y,\phi(t))|\le r|\psi(t)-\phi(t)|
\end{equation}
Then the functional equation ~\ref{funceq} has a unique bounded solution in $X$.
\end{theorem}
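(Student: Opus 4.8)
The plan is to verify that the mapping $T$ of \ref{funceq1} satisfies the hypotheses of Theorem~\ref{suz} on the $G_n$-complete space $(X,G_n)$, and then to identify its fixed point with the unique bounded solution of \ref{funceq}. I would begin by recording that $T$ really is a self-map of $X=B(S)$: since $g$ and $M$ are bounded, $|(T\psi)(x)|\le\sup|g|+\sup|M|$ for every $\psi\in X$ and $x\in S$, so $T\psi\in B(S)$. I would also note the trivial identity $G_n(\chi,\xi,\dots,\xi)=d(\chi,\xi)$ for all $\chi,\xi\in X$, which is immediate from the definition of $G_n$ as the maximum of the pairwise $d$-distances among its arguments (only the two values $\chi,\xi$ occur). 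Under this identification the premise \ref{condfunc} reads $\theta(r)\,d(\psi,T\psi)\le d(\psi,\phi)$, and the assertion of \ref{condsuz2} that remains to be established reads $d(T\psi,T\phi)\le r\,d(\psi,\phi)$.

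The substantive step is the classical $\varepsilon$-argument for suprema. Fix $\psi,\phi\in X$ with $\theta(r)\,d(\psi,T\psi)\le d(\psi,\phi)$, fix $x\in S$, and let $\varepsilon>0$. Choose $y_{1}\in D$ with
\[
(T\psi)(x)< g(x,y_{1})+M\big(x,y_{1},\psi(\tau(x,y_{1}))\big)+\varepsilon ,
\]
and set $t_{1}=\tau(x,y_{1})\in S$. Since $(T\phi)(x)\ge g(x,y_{1})+M\big(x,y_{1},\phi(t_{1})\big)$, subtracting yields
\[
(T\psi)(x)-(T\phi)(x)< M\big(x,y_{1},\psi(t_{1})\big)-M\big(x,y_{1},\phi(t_{1})\big)+\varepsilon .
\]
As $\psi,\phi$ satisfy \ref{condfunc}, the hypothesis applies at this $x,y_{1},t_{1}$ and gives $|M(x,y_{1},\psi(t_{1}))-M(x,y_{1},\phi(t_{1}))|\le r\,|\psi(t_{1})-\phi(t_{1})|\le r\,d(\psi,\phi)$, whence $(T\psi)(x)-(T\phi)(x)< r\,d(\psi,\phi)+\varepsilon$. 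A symmetric choice of a near-maximiser $y_{2}\in D$ for $(T\phi)(x)$ gives the same bound for $(T\phi)(x)-(T\psi)(x)$, so $|(T\psi)(x)-(T\phi)(x)|\le r\,d(\psi,\phi)+\varepsilon$. Letting $\varepsilon\downarrow 0$ and then taking the supremum over $x\in S$ produces $d(T\psi,T\phi)\le r\,d(\psi,\phi)$, i.e. $G_n(T\psi,T\phi,\dots,T\phi)\le r\,G_n(\psi,\phi,\dots,\phi)$; thus the implication \ref{condsuz2} holds for $T$ with the given $r$ and with $\theta$ as in Theorem~\ref{suz}.

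It then remains only to invoke Theorem~\ref{suz}: since $(X,G_n)$ is $G_n$-complete and $T$ satisfies \ref{condsuz2}, $T$ has a unique fixed point $f\in X$. By the definition \ref{funceq1}, the equality $Tf=f$ is precisely the statement that $f$ solves the functional equation \ref{funceq}, and uniqueness of the fixed point is uniqueness of the bounded solution.

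I expect the only place demanding care to be the bookkeeping in the $\varepsilon$-estimate: one must choose the near-maximisers $y_{1}$ (and its counterpart $y_{2}$ for the reverse inequality) so that the hypothesis on $M$ is invoked at the correct point $t_{1}=\tau(x,y_{1})\in S$, and one must use that the bound on $M$ is uniform over $(x,y)\in S\times D$ so that passing to the supremum over $x$ is legitimate; everything else is routine once the reduction $G_n(\chi,\xi,\dots,\xi)=d(\chi,\xi)$ is in place.
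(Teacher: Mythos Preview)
Your proposal is correct and follows essentially the same route as the paper: the classical $\varepsilon$-argument for the supremum to show that the premise \ref{condfunc} forces $G_n(T\psi,T\phi,\dots,T\phi)\le r\,G_n(\psi,\phi,\dots,\phi)$, followed by an appeal to Theorem~\ref{suz}. Your version is in fact slightly tidier, since you explicitly record that $T$ maps $B(S)$ to itself and spell out the identification $G_n(\chi,\xi,\dots,\xi)=d(\chi,\xi)$, both of which the paper leaves implicit.
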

\begin{proof}
Let $\lambda$ be an arbitrary positive real number and $\psi,\phi\in X$. For $x\in S$, let us choose $y_1,y_2\in D$ such that
\begin{equation}\label{cond1}
 T(\psi(x))<g(x,y_1)+M\big(x,y_1,\psi(\tau (x,y_1))\big)+\lambda
 \end{equation}
 \begin{equation}\label{cond2}
 T(\phi(x))<g(x,y_2)+M\big(x,y_2,\phi(\tau (x,y_2))\big)+\lambda
 \end{equation}
 By the definition of mapping $T$ and equation ~\ref{funceq1}, we have
 \begin{equation}\label{cond3}
  T(\psi(x))<g(x,y_2)+M\big(x,y_2,\psi(\tau (x,y_2))\big)
  \end{equation}
  \begin{equation}\label{cond4}
  T(\phi(x))<g(x,y_1)+M\big(x,y_1,\phi(\tau (x,y_1))\big)
  \end{equation}
  If the inequality ~\ref{condfunc} holds, then from inequalities ~\ref{cond1} and ~\ref{cond4}, we have
  \begin{align*}
  \begin{split}
   T(\psi(x))-T(\phi(x))&<M\big(x,y_1,\psi(\tau (x,y_1))\big)-M\big(x,y_1,\phi(\tau (x,y_1))\big)+\lambda\\
   &\le |M\big(x,y_1,\psi(\tau (x,y_1))\big)-M\big(x,y_1,\phi(\tau (x,y_1))\big)|+\lambda    
  \end{split}  
  \end{align*}
  Let $\tau(x,y_1)=x_1\in S$, then 
  \begin{align*}
  \begin{split}
   T(\psi(x))-T(\phi(x))&<|M\big(x,y_1,\psi(x_1))\big)-M\big(x,y_1,\phi(x_1)\big)|+\lambda\\
   &\le r|\psi(x_1)-\phi(x_1)|+\lambda\\
   &\le r\, G_n(\psi,\phi,\dots,\phi)+\lambda  
  \end{split}
  \end{align*}
  \begin{equation}\label{cond5}
  \text{or}\; \; T(\psi(x))-T(\phi(x))<r\, G_n(\psi,\phi,\dots,\phi)+\lambda
  \end{equation}
  
  Similarly from inequalities ~\ref{cond2} and ~\ref{cond3}, we have
  \begin{equation}\label{cond6}
   T(\phi(x))-T(\psi(x))<r\,G_n(\psi,\phi,\dots,\phi)+\lambda 
 \end{equation} 
Hence from inequalities ~\ref{cond5} and ~\ref{cond6}, we have
 \begin{equation}\label{cond7}
   |T(\psi(x))-T(\phi(x))|<r\,G_n(\psi,\phi,\dots,\phi)+\lambda 
 \end{equation} 
 The inequality ~\ref{cond7} is true for every $x\in S$, hence we have 
 \begin{equation*}
   G_n(T\psi,T\phi,\dots,T\phi)\le r\,G_n(\psi,\phi,\dots,\phi)+\lambda 
  \end{equation*} 
  Since $\lambda>0$ is arbitrary, hence
   \begin{equation*}
     G_n(T\psi,T\phi,\dots,T\phi)\le r\,G_n(\psi,\phi,\dots,\phi)
    \end{equation*}
 Therefore the inequality
\begin{equation*}
 \theta(r)G_n(\psi, T\psi, \dots , T\psi)\le G_n(\psi,\phi,\dots, \phi)
\end{equation*}
 implies
\begin{equation*}
      G_n(T\psi,T\phi,\dots,T\phi)\le r\,G_n(\psi,\phi,\dots,\phi)
\end{equation*}
Thus all the conditions of the theorem ~\ref{suz} are satisfied and hence the functional equation ~\ref{funceq} has a unique bounded solution.
\end{proof}

\end{document}